\def\BibTeX{{\rm B\kern-.05em{\sc i\kern-.025em b}\kern-.08em
    T\kern-.1667em\lower.7ex\hbox{E}\kern-.125emX}}
\DeclareSymbolFont{bbold}{U}{bbold}{m}{n}
\DeclareSymbolFontAlphabet{\mathbbold}{bbold}
\newcounter{prop}
\newcounter{rmrk}
\newcounter{thm}
\newtheorem{proposition}[prop]{Proposition}
\newtheorem{remark}[rmrk]{Remark}
\newtheorem{theorem}[thm]{Theorem}
\newcommand{\ubar}[1]{\underaccent{\bar}{#1}}
\newcommand{\vect}[1]{\boldsymbol{\mathbf{#1}}}
\begin{document}
\title{Network-Cognizant Time-Coupled Aggregate Flexibility of Distribution Systems Under Uncertainties}
\author{Bai Cui, \IEEEmembership{Member, IEEE}, Ahmed Zamzam, \IEEEmembership{Member, IEEE}, and Andrey Bernstein, \IEEEmembership{Senior Member, IEEE}
\thanks{This work was authored by the National Renewable Energy Laboratory, operated by Alliance for Sustainable Energy, LLC, for the U.S. Department of Energy (DOE) under Contract No. DE-AC36-08GO28308. Thir work was supported by the Laboratory Directed Research and Development Program at the National Renewable Energy Laboratory. The views expressed in the article do not necessarily represent the views of the DOE or the U.S. Government. The U.S. Government retains and the publisher, by accepting the article for publication, acknowledges that the U.S. Government retains a nonexclusive, paid-up, irrevocable, worldwide license to publish or reproduce the published form of this work, or allow others to do so, for U.S. Government purposes. }
\thanks{The authors are with the National Renewable Energy Laboratory, Golden, CO, USA 80401 (e-mails: \texttt{\{bai.cui, ahmed.zamzam,
andrey.bernstein\}@nrel.gov}).}}

\maketitle
\thispagestyle{empty} % Removes the page number in the first page

\begin{abstract}
Increasing integration of distributed energy resources (DERs) within distribution feeders provides unprecedented flexibility at the distribution-transmission interconnection. To exploit this flexibility and to use the capacity potential of aggregate DERs, feasible substation power injection trajectories need to be efficiently characterized. 
This paper provides an ellipsoidal inner approximation of the set of feasible power injection trajectories at the substation such that for any point in the set, there exists a feasible disaggregation strategy of DERs for any load uncertainty realization. 
The problem is formulated as one of finding the robust maximum volume ellipsoid inside the flexibility region under uncertainty. Though the problem is NP-hard even in the deterministic case, this paper derives novel approximations of the resulting adaptive robust optimization problem based on optimal second-stage policies. 
The proposed approach yields less conservative flexibility characterization than existing flexibility region approximation formulations. The efficacy of the proposed method is demonstrated on a realistic distribution feeder.
\end{abstract}

\begin{IEEEkeywords}
Adaptive robust optimization, aggregate flexibility, distributed energy resource, distribution system.
\end{IEEEkeywords}

\vspace{-0.1in}

\section{Introduction}
\label{sec:introduction}
\IEEEPARstart{S}{ignificant} changes have been witnessed at the power distribution network level as a result of the increasing integration of distributed energy resources (DERs), such as renewable energy sources, energy storage, and flexible demand response assets. Although the flexibility of a single DER is limited, the rapid proliferation of DERs adds unprecedented \emph{aggregate} flexibility to the operation of distribution networks~\cite{kroposki2018autonomous}. This flexibility refers to the ability of a distribution feeder to reduce, shape, shift, and modulate its aggregated power consumption/injection at the substation level. Clearly, coordinated operation between transmission and distribution (T\&D) networks is required to fully exploit this flexibility. 

Although it is practically impossible to centrally optimize the operation of large-scale DERs in real time because of cybersecurity risks and increasing computational complexity, hierarchical optimization techniques have been proposed to exploit the flexibility of DERs in the coupled networks operations~\cite{guerrero2010hierarchical, zhou2019hierarchical }; however, these methods generally suffer from convergence issues and require continuous exchange of information between T\&D operators. A promising alternative is to characterize an efficient and explicit approximation of the aggregate flexibility at the distribution level. The approximation, which is later used in transmission-level operation as a proxy of a detailed distribution system model, renders the distribution-transmission co-optimization secure and tractable. An example of the framework can be found in \cite{chen2020aggregate}.

A plethora of approaches has been proposed to characterize or approximate the flexibility of DERs~\cite{silva2018estimating, zhao2017geometric, nazir2018inner, xu2020quantification, kundu2019scalable, polymeneas2016aggregate}, yet most do not consider the network operational constraints such as voltage magnitude limits. The assumption that network constraints are nonbinding might be valid when aggregating a small population of DERs; however, these constraints become critical and binding for large-scale DER aggregations. To that end, \cite{chen2020aggregate,chen2020leveraging} recently proposed aggregation methods of DERs that consider the network operational constraints. These approaches aim to obtain the maximum-volume hyperbox that can be inscribed inside the aggregate flexibility region of the network. The disadvantage of hyperbox-based approximations is that they provide only the conservative \emph{time-uncoupled} flexibility region by construction, and therefore, they fail to effectively capture time-coupling constraints that govern the operation of inter-temporal dependent devices, such as energy storage units; heating, ventilating, and air-conditioning (HVAC) systems; and electric vehicle charging stations. From another prospective, the flexibility region of networked DER aggregators was approximated in~\cite{Nazir2019} to guarantee the feasibility of upper layer system constraints, which can be viewed as a dual problem to the problem of aggregate flexibility approximation considered here.

Aggregate flexibility is affected by model uncertainty, which is a salient feature of modern distribution systems wherein load volatility abounds. An approach to approximate network-cognizant aggregate flexibility under load uncertainty was proposed in \cite{li2019robust}. The approach, however, accounts for only the reactive power flexibility region for a single time period. Extension of the method to account for coupled real-reactive power appeared in \cite{tan2020estimating}, but the question of extension to the multiperiod flexibility region remains open.

Despite extensive research, the trio of challenges (network-awareness, uncertainty incorporation, and multiperiod flexibility) has not yet been addressed in the literature, to the best of our knowledge.
In this paper, we propose the first efficient formulations for ellipsoidal inner approximation of the network-cognizant multiperiod aggregate flexibility of networked DERs under uncertainty. We define the aggregate flexibility region as the set of multiperiod real power injections at the substation that can be realized by controllable DERs under any uncertainty realizations. The problem of finding an ellipsoidal inner approximation of the flexibility region can be cast as one of finding the maximum volume inscribe ellipsoid (MVE) of a polytopic projection. Despite the NP-hardness of the posed problem, we propose tractable reformulations and tight approximations based on adaptive robust optimization (ARO) formulation with optimal second-stage policies. The ability of the proposed approximation method to capture a larger volume of the aggregate flexibility region than state-of-the-art hyperbox approximations is demonstrated on a real 126-node distribution feeder. 

\emph{Notation:} Vectors and matrices are represented by upright bold letters, whereas scalars are represented by normal ones. The $n\times n$ identity matrix is denoted by $\mathbf{I}_n$. A zero vector or matrix is denoted by $\mathbbold{0}$. $\| \cdot \|$ denotes the Euclidean norm of a real vector.

\section{System Model} \label{sect:model}

In this paper, we consider a multiphase radial distribution network with nodes collected in the set $\mathcal{N} = \{0, 1, \ldots, N\}$. Let $0$ represent the point of common coupling---i.e., the substation---and define the set $\mathcal{N}^+ := \mathcal{N}\backslash\{0\}$. For simplicity, assume that all the buses $l\in \mathcal{N}$ have three phases: $a$, $b$, and $c$. Define the set ${\Phi}=\{a, b, c\}$ and ${\Phi}_{\Delta} = \{ab, bc, ca\}$. We denote the voltage magnitudes at bus $k$ by $\vect{v}_k \in \mathbb{R}^3$. Concatenating the voltage magnitudes at all buses in $\mathcal{N}^+$, we construct the vector $\vect{v}\in\mathbb{R}^{3N}$.
Let the aggregation time horizon be discretized into $T$ periods, and let the duration of each period be $\tau$.
In this work, we consider three types of DERs: energy storage units, photovoltaic (PV) inverters, and HVAC systems. Note that the proposed aggregation framework can be easily extended to account for other types of DERs.

\vspace{-0.05in}

\subsection{Device Model}

\subsubsection{Energy Storage Units}

Let the set of phases connected to energy storage units be denoted by $\mathcal{B}$. For an energy storage unit installed at phase $\phi$ at bus $k$, its power outputs are constrained by:
\begin{align}\label{eq:bat-rate-const}
    & \ubar{P}_{k, \phi}^{(B)} \leq p_{k, \phi}^{(B)}(t) \leq \bar{P}_{k, \phi}^{(B)}, &&\forall t\\
    & q_{k, \phi}^{(B)}(t) = \psi_k^{(B)} p_{k, \phi}^{(B)}(t), && \forall t
\end{align}
where $\ubar{P}_{k, \phi}^{(B)}$ and $\bar{P}_{k, \phi}^{(B)}$ denote the maximum charging and discharging rates, and $\psi_k^{(B)}$ denotes the fixed power factor of the energy storage unit. In addition, the state of charge of the energy storage unit is assumed to satisfy the following linear model and box constraint:
\begin{align}\label{eq:bat-SoC}
    & e_{k, \phi} (t) = \kappa_k e_{k, \phi}(t-1) - \tau p_{k, \phi}^{(B)}(t), && \forall t \\
    & \ubar{e}_{k, \phi} \leq e_{k, \phi}(t) \leq E_{k, \phi}, && \forall t
\end{align}
where $e_{k, \phi} (t)$ denotes the battery state of charge; $0 < \kappa_k \le 1$ is the storage efficiency factor; and $\ubar{e}_{k, \phi}$ and $E_{k, \phi}$ denote the minimum allowed state of charge and the capacity of the energy storage unit, respectively.

\subsubsection{PV Inverters}

We denote the set of phases with installed PV inverters by $\mathcal{R}$. For a PV unit installed at phase $\phi$ at bus $k$, the active and reactive power injections at time $t$ are constrained as follows:
\begin{align}\label{eq:ren-const}
& 0 \leq p_{k, \phi}^{(R)}(t) \leq \bar{P}_{k, \phi}^{(R)}(t), && \forall t \\
& q_{k, \phi}^{(R)}(t) = \psi_k^{(R)} p_{k, \phi}^{(R)}(t) && \forall t
\end{align}
where $\bar{P}_{k, \phi}^{(R)}(t)$ denotes the available active power at this PV unit at time $t$, and $\psi_k^{(R)}$ denotes the fixed power factor of the PV unit.

\subsubsection{HVAC Systems}

We denote the set of phases with HVAC systems by $\mathcal{H}$. For an HVAC system installed at phase $\phi$ at bus $k$, the active and reactive power injections at time $t$ are constrained as follows:
\begin{align} \label{eq:hvac-const}
& 0 \leq p_{k, \phi}^{(h)}(t) \leq \bar{P}_{k, \phi}^{(h)}(t), \quad q_{k, \phi}^{(h)}(t) = \psi_k^{(h)} p_{k, \phi}^{(h)}(t) && \hspace{-0.2in} \forall t \\
& \ubar{H}_k \le H_k^{\mathrm{in}}(t) \le \bar{H}_k, && \hspace{-0.2in} \forall t \\
& H_k^{\mathrm{in}}(t) = H_k^{\mathrm{in}}(t-1) + \alpha_k (H_k^{\mathrm{out}}(t) - H_k^{\mathrm{in}}(t-1)) \nonumber\\
& \hspace{1.5in} + \tau \beta_k p_k(t)^{(h)}(t), && \hspace{-0.2in} \forall t \label{eq:hvac-const:c}
\end{align}
where $\bar{P}_{k, \phi}^{(h)}(t)$ denotes the active power capacity of the HVAC system at time $t$, $\psi_k^{(h)}$ denotes the fixed power factor, and $H^{\mathrm{in}}(t)$ and $H^{\mathrm{out}}(t)$ are the indoor and outdoor temperatures at time $t$. Equation \eqref{eq:hvac-const:c} depicts the indoor temperature dynamics, where $\alpha_k$ and $\beta_k$ are the parameters specifying the thermal characteristics of the building and the environment. The model details can be found in \cite{li2011optimal}.

\subsection{Uncertain Load Model}

We suppose the model uncertainty comes solely from uncontrollable loads in the network. The uncontrollable loads are classified into three categories based on their daily average real power consumption as: residential (less than $10$ kW), commercial (between $10$ and $100$ kW), and industrial (more than $100$ kW). The load variations within each category are assumed to be similar and share a common uncertainty variable for each time step. To be specific, with given uncertainty level $\delta$, the real power demand $p_{k, \phi}^{(L)}(t)$ of an uncontrollable load at node $k$, phase $\phi$, and time $t$ with nominal power $p_{k,\phi}^{L_0}(t)$ is modeled as:
\begin{align} \label{eq:uncertain}
    p_{k, \phi}^{(L)}(t) = 
    \begin{cases}
        p_{k,\phi}^{L_0}(t) (1 + \delta \zeta^{\mathrm{res}}_t), & \text{if } i \text{ is residential} \\
        p_{k,\phi}^{L_0}(t) (1 + \delta \zeta^{\mathrm{com}}_t), & \text{if } i \text{ is commercial} \\
        p_{k,\phi}^{L_0}(t) (1 + \delta \zeta^{\mathrm{ind}}_t), & \text{if } i \text{ is industrial}
    \end{cases}
\end{align}
where: 
\begin{align} \label{eq:zetat}
    \vect{\zeta}_t = \begin{bmatrix} \zeta^{\mathrm{res}}_t & \zeta^{\mathrm{com}}_t & \zeta^{\mathrm{ind}}_t \end{bmatrix}^\top \text{  and  } \|\vect{\zeta}_t\| \le 1.
\end{align}
For future reference, let $\vect{\zeta} \in \mathbb{R}^{3T}$ be the concatenation of all $\vect{\zeta}_t, t = 1, \ldots, T$. In addition, the uncontrollable loads are assumed to have constant power factor. The proposed flexibility analysis approach is generic enough to handle more general device models and uncertainty sets as long as they fit in the general model into Section \ref{sect:network}, but they are not pursued here because of space limitations.

\subsection{Network Model} \label{sect:network}

The total power injection at time $t$, phase $\phi \in \Phi \cup \Phi_\Delta$, and node $k \in \mathcal{N}^+$ is given by:
\begin{align}\label{eq:nodal-inj}
    p_{k, \phi}(t) &= p_{k, \phi}^{(B)}(t) - p_{k, \phi}^{(R)}(t) - p_{k, \phi}^{(h)}(t) - p_{k, \phi}^{(L)}(t) \\
    q_{k, \phi}(t) &= q_{k, \phi}^{(B)}(t) - q_{k, \phi}^{(R)}(t) - q_{k, \phi}^{(h)}(t) - q_{k, \phi}^{(L)}(t)
\end{align}
The respective device power injection $p_{k,\phi}^{(\cdot)}$, $q_{k,\phi}^{(\cdot)}$ is zero when the specific type of device is not present at the phase. Then, we collect the active (reactive) power injections from the delta and wye phases at all buses $k \in \mathcal{N}^+$ at all time slots $t \in \{1, \ldots, T\}$ in the vectors ${\vect{p}}_{\Delta}$ (${\vect{q}}_{\Delta}$) and ${\vect{p}}_{Y}$ (${\vect{q}}_{Y}$), respectively. 

Then, we can use the linear power flow model developed in~\cite{bernstein2017linear} to approximate the voltage magnitude as follows:
\begin{align}\label{eq:power-flow-voltMag}
    {\bf v} = {\bf M}_{\Delta}^{(p)} {\vect{p}}_{\Delta} + {\bf M}_{\Delta}^{(q)} {\vect{q}}_{\Delta} + {\bf M}_{Y}^{(p)} {\vect{p}}_{Y} + {\bf M}_{Y}^{(q)} {\vect{q}}_{Y} + \tilde{\bf v}
\end{align}
where ${\bf v}\in \mathbb{R}^{3NT}$ collects the voltage magnitudes at all phases $\phi \in \Phi$ at all buses $k \in \mathcal{N}^+$ for all time steps $t \in \{1, \ldots, T\}$. Also, $\tilde{\bf v}$ is the zero-injection voltage of the network. The application of the linear power flow model is justified because of the presence of stringent voltage magnitude constraints in the distribution system \cite{bernstein2017linear}. The voltage magnitudes ${\bf v}$ are constrained to satisfy the lower and upper voltage limits as follows:
\begin{align} \label{eq:voltMag-const}
    \ubar{\bf v} \leq {\bf v} \leq \bar{\bf v}.
\end{align}
In addition, the net power injections at the substation (Node $0$) are given by:
\begin{align}
       {\vect{p}}_0 = {\bf G}_{\Delta}^{(p)} {\vect{p}}_{\Delta} + {\bf G}_{\Delta}^{(q)} {\vect{q}}_{\Delta} + {\bf G}_{Y}^{(p)} {\vect{p}}_{Y} + {\bf G}_{Y}^{(q)} {\vect{q}}_{Y} + {\bf c} 
\end{align}
where the vector ${\vect{p}}_0$ collects the net injections at all phases at the substation for all time instants. 

We define a vector ${\vect{p}} \in \mathbb{R}^{nT}$ that collects the control variables of individual controllable devices at all time steps, which include $p_{k, \phi}^{(B)} (t)$ and $q_{k, \phi}^{(B)} (t)$ for all phases in $\mathcal{B}$, $p_{k, \phi}^{(R)} (t)$ and $q_{k, \phi}^{(R)} (t)$ for all phases in $\mathcal{R}$, and $p_{k, \phi}^{(h)} (t)$ and $q_{k, \phi}^{(h)} (t)$ for all phases in $\mathcal{H}$. By eliminating intermediate variables, all constraints~\eqref{eq:bat-rate-const}--\eqref{eq:voltMag-const} can be written compactly as:
\begin{align} \label{eq:LPfeasibility}
    \vect{W} {\vect{p}} \le \vect{z}(\vect{\zeta}),
\end{align}
where $\vect{W}$ is a constant constraint matrix capturing the device operational constraints and the network voltage magnitude constraints for all time steps. The vector $\vect{z}$ incorporates the uncontrollable loads and is therefore an affine function of the load uncertainty vector $\vect{\zeta}$.

In addition, the substation real power and the outputs of DERs are related by:
\begin{align} \label{eq:aggragation}
    {\vect{p}}_0 = \vect{D}{\vect{p}} + \vect{b}(\vect{\zeta})
\end{align}
where $\vect{D} \in \mathbb{R}^{T \times nT}$ and $\vect{b} \in \mathbb{R}^T$ model the dependency of the substation injections on the devices operations, i.e., the aggregation model. Note that $\vect{D}$ is a constant matrix, whereas $\vect{b}$ depends affinely on the load uncertainty vector $\vect{\zeta}$.

\section{Ellipsoidal Inner Approximation of Aggregate Flexibility} \label{sect:main}
\subsection{Problem Statement and Reformulation}

Given any load uncertainty $\|\vect{\zeta}_t\| \le 1, \forall t$, the set of $\vect{p}_0$ that admits the disaggregation strategy $\vect{p}(\vect{p}_0, \vect{\zeta})$ satisfying \eqref{eq:LPfeasibility}--\eqref{eq:aggragation} is polytopic. The aggregate flexibility region, which is the intersection of such sets under all $\vect{\zeta}$, is also convex. Because explicit characterization of the aggregate flexibility region is hard, this paper aims to identify an MVE $\mathcal{E} = \{ \vect{p}_0 :\; \vect{p}_0 = \vect{E}\vect{\xi} + \vect{e}, \|\vect{\xi}\| \le 1 \} \subset \mathbb{R}^T$ with $\vect{E} \in \mathbb{R}^{T\times T} \succeq 0$ and $\vect{e} \in \mathbb{R}^T$, parametrized by $\vect{\xi} \in \mathbb{R}^T$, that inscribes the aggregate flexibility region. Because the volume of $\mathcal{E}$ is proportional to $\det(\vect{E})$, the problem can be formulated as:
\begin{multline} \label{eq:ARO}
    \max_{\vect{e}, \vect{E} \succeq 0} \Big\{ \log\det \vect{E} :\; \forall \|\vect{\xi}\| \le 1, \forall \|\vect{\zeta}_t\| \le 1, t = 1, \ldots, T \\
    \exists \vect{p} \text{ s.t. }
    \vect{E}\vect{\xi} + \vect{e} = \vect{D}\vect{p} + \vect{b}(\vect{\zeta}), \vect{W}\vect{p} \le \vect{z}(\vect{\zeta}) \Big\},
\end{multline}
where we maximize the volume of the ellipsoid $\mathcal{E}$ such that for any point in $\mathcal{E}$ and any uncertainty realization $\vect{z}(\vect{\zeta}), \vect{b}(\vect{\zeta})$, there is a corresponding disaggregated power $\vect{p}$ satisfying the constraint set $\vect{W}\vect{p} \le \vect{z}(\vect{\zeta})$.

Note that problem \eqref{eq:ARO} is an ARO problem containing both equality and inequality linear constraints. In general, equality constraints in robust optimization problems are hard to deal with and should be eliminated whenever possible \cite{gorissen2015practical}. So, we first discuss how to eliminate the equality constraints $\vect{E}\vect{\xi} + \vect{e} = \vect{D}\vect{p} + \vect{b}(\vect{\zeta})$ relating to the ellipsoidal characterization of substation power and individual load power. 

Because the aggregated power $p_{0,t}$ is only a function of individual load powers at time $t$, the $t$-th row of matrix $\vect{D}$ has only nonzero entries for columns corresponding to time $t$, so the rows of $\vect{D}$ are linearly independent. It follows that $\vect{D} \in \mathbb{R}^{T\times nT}$ has rank $T$. Let $\vect{b}^1, \ldots, \vect{b}^{nT}$ be orthogonal bases of $\mathbb{R}^{nT}$, such that $\vect{b}^{T+1}, \ldots, \vect{b}^{nT}$ span the null-space of $\vect{D}$; therefore, every $\vect{p} \in \mathbb{R}^{nT}$ can be written uniquely as $\vect{p} = \sum_{i=1}^{T} x_i \vect{b}^i + \sum_{i=1}^{(n-1)T} y_i\vect{b}^{i+T} = \vect{B}_1\vect{x} + \vect{B}_2\vect{y}$ for real numbers $x_1, \ldots, x_{T}, y_1, \ldots, y_{(n-1)T}$ where:
\begin{align}
    \vect{B}_1 &= \begin{bmatrix} \vect{b}^1 & \ldots & \vect{b}^T \end{bmatrix}
    , \;\; \vect{B}_2 = \begin{bmatrix} \vect{b}^{T+1} & \ldots & \vect{b}^{nT}  \end{bmatrix}.
\end{align}
Constraint \eqref{eq:aggragation} can be rewritten in the new coordinates $\vect{b}^1, \ldots, \vect{b}^{nT}$ as:
\begin{align} \label{eq:LPfeasibility:reform}
    \vect{p}_0 = \vect{D}( \vect{B}_1 \vect{x} + \vect{B}_2 \vect{y}) + \vect{b}(\vect{\zeta}) = \tilde{\vect{D}} \vect{x} + \vect{b}(\vect{\zeta}),
\end{align}
where $\tilde{\vect{D}} := \vect{D}\vect{B}_1 \in \mathbb{R}^{T \times T}$ is nonsingular, and $\vect{D}\vect{B}_2\vect{y}$ vanishes because $\vect{B}_2$ spans the null-space of $\vect{D}$. Similarly, we can replace $\vect{p}$ by $\vect{B}_1\vect{x} + \vect{B}_2\vect{y}$ in \eqref{eq:LPfeasibility}, which becomes:
\begin{align} \label{eq:aggragation:reform:1}
    \vect{W}_1\vect{x} + \vect{W}_2\vect{y} \le \vect{z}(\vect{\zeta}). 
\end{align}
where $\vect{W}_1 := \vect{W}\vect{B}_1 \in \mathbb{R}^{m \times T}$, $\vect{W}_2 := \vect{W}\vect{B}_2 \in \mathbb{R}^{m \times (n-1)T}$, and $m$ denote the number of inequality constraints in \eqref{eq:LPfeasibility}. The equality constraint \eqref{eq:LPfeasibility:reform} can be subsequently eliminated by substituting $\vect{x} = \tilde{\vect{D}}^{-1} (\vect{p}_0 - \vect{b}(\vect{\zeta}))$ in \eqref{eq:aggragation:reform:1}, which yields:
\begin{align} \label{eq:LPfeasibility:reform:2}
    \vect{W}_1 \tilde{\vect{D}}^{-1} (\vect{p}_0 - \vect{b}(\vect{\zeta})) + \vect{W}_2\vect{y} \le \vect{z}(\vect{\zeta}). 
\end{align}
Constraint \eqref{eq:LPfeasibility:reform:2} is equivalent to \eqref{eq:LPfeasibility}--\eqref{eq:aggragation} in the sense that, for given $\|\vect{\zeta}_t \| \le 1$, $t = 1, \ldots, T$, $(\vect{p}_0, \vect{p})$ satisfies constraints \eqref{eq:LPfeasibility}--\eqref{eq:aggragation} if and only there exists $(\vect{p}_0, \vect{y})$ satisfying \eqref{eq:LPfeasibility:reform:2} for some $\vect{y}$. For brevity, we rewrite \eqref{eq:LPfeasibility:reform:2} as:
\begin{align}
    \vect{W}_1 \tilde{\vect{D}}^{-1} \vect{p}_0 + \vect{W}_2\vect{y} \le \vect{u}(\vect{\zeta}),
\end{align}
where $\vect{u}(\vect{\zeta}) = \vect{z}(\vect{\zeta}) + \vect{W}_1 \tilde{\vect{D}}^{-1} \vect{b}(\vect{\zeta})$. For future reference, because $\vect{z}$ and $\vect{b}$ are affinely dependent on $\vect{\zeta}$, we denote the explicit affine relationship between $\vect{u}$ and $\vect{\zeta}$ by $\vect{u} := \vect{\Theta}\vect{\zeta} + \vect{\nu} = \sum_{t=1}^T\vect{\Theta}_t \vect{\zeta}_t + \vect{\nu}$, and we denote the corresponding uncertainty set as $\mathcal{U} = \{ \vect{u} :\;  \vect{u} = \sum_{t=1}^T\vect{\Theta}_t \vect{\zeta}t + \vect{\nu}, \; \| \vect{\zeta}_t \| \le 1, t = 1, \ldots, T \}$. 

This discussion leads to the following proposition on the reformulation of \eqref{eq:ARO}:
\begin{proposition} 
    Problem \eqref{eq:ARO} can be reformulated as:
    \begin{multline} \label{eq:6}
        \max_{\vect{e}, \vect{E} \succeq 0} \Big\{ \log\det \vect{E} :\; \forall \|\vect{\xi}\| \le 1, \forall \vect{u} \in \mathcal{U},
        \exists \vect{y} \text{ s.t. } \\
        \vect{W}_1 \tilde{\vect{D}}^{-1} (\vect{E} \vect{\xi} + \vect{e}) + \vect{W}_2\vect{y} \le \vect{u} \Big\},
    \end{multline}
    where the individual DER control vector $\vect{p}$ can be recovered as $\vect{p} = \vect{B}_1 \tilde{\vect{D}}^{-1} (\vect{p}_0 - \vect{b}(\vect{\zeta})) + \vect{B}_2 \vect{y}$ given $\vect{p}_0$, $\vect{y}$, and $\vect{\zeta}$.
\end{proposition}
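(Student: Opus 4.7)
The plan is to justify the reformulation by making rigorous the coordinate-change argument sketched in the paragraphs preceding the proposition, and then verifying that the feasibility set for $(\vect{e},\vect{E})$ is preserved. Since the objective $\log\det\vect{E}$ and the outer constraint $\vect{E}\succeq 0$ are identical in \eqref{eq:ARO} and \eqref{eq:6}, it suffices to show that for every fixed $(\vect{e},\vect{E})$, the inner semi-infinite ``$\forall\,\vect{\xi},\vect{\zeta}\;\exists\,\vect{p}$'' condition of \eqref{eq:ARO} is equivalent to the ``$\forall\,\vect{\xi},\vect{u}\;\exists\,\vect{y}$'' condition of \eqref{eq:6}.

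First I would fix $(\vect{e},\vect{E})$, $\vect{\xi}$ with $\|\vect{\xi}\|\le 1$, and $\vect{\zeta}$ with $\|\vect{\zeta}_t\|\le 1$ for all $t$; set $\vect{p}_0=\vect{E}\vect{\xi}+\vect{e}$ and $\vect{u}=\vect{u}(\vect{\zeta})=\vect{z}(\vect{\zeta})+\vect{W}_1\tilde{\vect{D}}^{-1}\vect{b}(\vect{\zeta})$. I would then establish the pointwise equivalence: there exists $\vect{p}\in\mathbb{R}^{nT}$ satisfying $\vect{D}\vect{p}+\vect{b}(\vect{\zeta})=\vect{p}_0$ and $\vect{W}\vect{p}\le\vect{z}(\vect{\zeta})$ if and only if there exists $\vect{y}\in\mathbb{R}^{(n-1)T}$ with $\vect{W}_1\tilde{\vect{D}}^{-1}\vect{p}_0+\vect{W}_2\vect{y}\le\vect{u}$. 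For the forward direction, I would use the fact that $\{\vect{b}^1,\ldots,\vect{b}^{nT}\}$ is a basis of $\mathbb{R}^{nT}$ to write $\vect{p}=\vect{B}_1\vect{x}+\vect{B}_2\vect{y}$ uniquely; the equality constraint then forces $\tilde{\vect{D}}\vect{x}=\vect{p}_0-\vect{b}(\vect{\zeta})$, so $\vect{x}=\tilde{\vect{D}}^{-1}(\vect{p}_0-\vect{b}(\vect{\zeta}))$ (using invertibility of $\tilde{\vect{D}}$, which has already been argued from $\mathrm{rank}(\vect{D})=T$ and $\mathrm{range}(\vect{B}_1)$ being complementary to the null space of $\vect{D}$). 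Substituting into $\vect{W}\vect{p}\le\vect{z}(\vect{\zeta})$ yields the inequality in $\vect{y}$ immediately.

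For the reverse direction, given $\vect{y}$ satisfying the new inequality, I would define $\vect{p}:=\vect{B}_1\tilde{\vect{D}}^{-1}(\vect{p}_0-\vect{b}(\vect{\zeta}))+\vect{B}_2\vect{y}$ and verify directly: (i) $\vect{D}\vect{p}=\vect{D}\vect{B}_1\tilde{\vect{D}}^{-1}(\vect{p}_0-\vect{b}(\vect{\zeta}))+\vect{D}\vect{B}_2\vect{y}=\vect{p}_0-\vect{b}(\vect{\zeta})$ since $\vect{D}\vect{B}_1=\tilde{\vect{D}}$ and $\vect{D}\vect{B}_2=\vect{0}$, so \eqref{eq:aggragation} holds; (ii) $\vect{W}\vect{p}=\vect{W}_1\tilde{\vect{D}}^{-1}(\vect{p}_0-\vect{b}(\vect{\zeta}))+\vect{W}_2\vect{y}\le\vect{z}(\vect{\zeta})$, which is exactly the new inequality after rearrangement. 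This simultaneously proves the recovery formula asserted in the proposition.

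Finally, I would observe that as $\vect{\zeta}$ ranges over $\{\vect{\zeta}:\|\vect{\zeta}_t\|\le 1,\,\forall t\}$, the induced $\vect{u}(\vect{\zeta})$ ranges exactly over $\mathcal{U}$ by the definitions of $\vect{\Theta}$ and $\vect{\nu}$; thus quantifying over $\vect{\zeta}$ in \eqref{eq:ARO} is equivalent to quantifying over $\vect{u}\in\mathcal{U}$ in \eqref{eq:6}. Taking the conjunction over all $\vect{\xi}$ and all uncertainties, the feasible sets of $(\vect{e},\vect{E})$ coincide, so the two optimization problems share the same optimum. The main subtlety, and the one step I would take care with, is the reverse direction: verifying that the $\vect{p}$ reconstructed from $(\vect{p}_0,\vect{y},\vect{\zeta})$ indeed satisfies \emph{both} the equality and the original inequality without any hidden residual from the null-space component; this reduces cleanly to the identities $\vect{D}\vect{B}_1=\tilde{\vect{D}}$ and $\vect{D}\vect{B}_2=\vect{0}$ used above.
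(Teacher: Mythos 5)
Your argument is correct and follows the same route as the paper, which presents the proposition as an immediate consequence of the preceding coordinate-change discussion (decomposing $\vect{p}=\vect{B}_1\vect{x}+\vect{B}_2\vect{y}$, eliminating the equality constraint via $\vect{x}=\tilde{\vect{D}}^{-1}(\vect{p}_0-\vect{b}(\vect{\zeta}))$, and identifying the image of the $\vect{\zeta}$-uncertainty with $\mathcal{U}$). You simply make both directions of that equivalence explicit, including the recovery formula, which is a faithful and complete elaboration of the paper's reasoning.
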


The problem \eqref{eq:6} is one of finding MVE in a polytopic projection under uncertainty. Although finding the MVE inscribed in a polytope is relatively easy, the same does not hold for \eqref{eq:6}. As noted in \cite{zhen2018computing}, this problem is generally intractable even when no uncertainties are present---i.e., when $\mathcal{U}$ is a singleton---because deriving an explicit description of a projected polytope is NP-hard. We extend the tractable reformulations therein to the case with ellipsoidal uncertainty set $\mathcal{U}$, and we present novel reformulations of \eqref{eq:6} based on quadratic and affine policies in the following section.

\subsection{Policy-Based Convex Approximations}

We first discuss a convex restriction of \eqref{eq:6} based on quadratic policy where we restrict the second-stage variable $\vect{y}$ to be a quadratic function of the uncertainty $\vect{\eta} := \begin{bsmallmatrix} \vect{\xi} \\ \vect{\zeta} \end{bsmallmatrix}$ as:
\begin{align} \label{eq:quadratic}
    \vect{y} = \begin{Bmatrix} \vdots \\ \vect{\eta}^\top \vect{Q}_i \vect{\eta} \\ \vdots \end{Bmatrix} + \vect{L} \vect{\eta} + \vect{c},
\end{align}
% \begin{align} \label{eq:quadratic}
%     \tilde{p}_i = \eta^\top Q_i \eta + \ell_i^\top \eta + c_i,
% \end{align}
where $\vect{Q}_i, i = 1, \ldots, (n-1)T $, $\vect{L}$, and $\vect{c}$ are the policy parameters. We use $\vect{w}_{1,i}$, $\vect{w}_{2,i}$, and $\vect{\theta}_{i}$ to denote the $i$-th row of $\vect{W}_1$, $\vect{W}_2$, and $\vect{\Theta}$, respectively. Using the approximate S-Lemma \cite[Thm. B.3.1]{ben2009robust}, the following theorem provides a tractable approximation of \eqref{eq:6} under quadratic policy \eqref{eq:quadratic}. 

\begin{theorem} \label{thm:quadratic}
    Let $ \vect{\rho}^i := ([ \vect{w}_{1,i}\tilde{\vect{D}}^{-1} \vect{E}, \; -\boldsymbol{\theta}_{i} ] + \vect{w}_{2,i}\vect{L}) / 2$ and: 
    \begin{align}
        \vect{J}_i := 
        \begin{bmatrix} 
            \lambda_{1,i}\vect{I}_T & \mathbbold{0} & \cdots & \mathbbold{0} \\
            \mathbbold{0} & \lambda_{2,i} \vect{I}_{N_u} & \ddots & \vdots \\
            \vdots & & \ddots & \mathbbold{0} \\
            \mathbbold{0} & \cdots & & \lambda_{T+1,i} \vect{I}_{N_u}
        \end{bmatrix}
    \end{align}
    for $i = 1, \ldots, m$, then the problem:
    \begin{subequations} \label{eq:quadratic:reform}
    \begin{align}
        &\max_{\substack{\vect{E} \succeq 0, \vect{e}, \vect{c},\vect{L}, \\ \{\vect{Q}_i\}_{i=1}^{m}, \{\lambda_{k,i}\}_{k=0, i=1}^{T+1, m} }} \quad  \log \det \vect{E} \\
        &\quad \text{s.t.} \quad  (\forall i = 1, \ldots, m) \nonumber\\
        &  \sum_{k=0}^{T+1} \lambda_{k,i} + \vect{w}_{1,i} \tilde{\vect{D}}^{-1} \vect{e} + \vect{w}_{2,i} \vect{c} - \nu_i \le 0, \label{eq:quadratic:reform:b} \\
        &  \begin{bmatrix} 
        \lambda_{0,i} & \mathbbold{0} \\ 
        \mathbbold{0} & \vect{J}_i \end{bmatrix} \succeq \begin{bmatrix}
        0 & \vect{\rho}^i \\
        (\vect{\rho}^i)^\top & \sum\limits_{j=1}^{(n-1)T} {W_{2,ij}\vect{Q}_j} 
        \end{bmatrix}, \label{eq:quadratic:reform:c} \\
        & \lambda_{k,i} \ge 0, \; \qquad k = 0, \ldots, T+1, \label{eq:quadratic:reform:d}
    \end{align}
    \end{subequations}
    is an approximation of \eqref{eq:6} with quadratic policy \eqref{eq:quadratic} with tightness factor at most $9.19 \sqrt{\ln{(T+1)}}$. That is, the feasible set of \eqref{eq:quadratic:reform} projected on $(\vect{E}, \vect{e})$-space is a superset of that of \eqref{eq:6} when the bounds on the Eucliean norm of $\vect{\xi}$ and $\vect{\zeta}_t, \forall t$ are relaxed from $1$ to $9.19 \sqrt{\ln{(T+1)}}$ in \eqref{eq:6}.
\end{theorem}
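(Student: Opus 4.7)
The plan is to substitute the quadratic policy \eqref{eq:quadratic} into the robust constraint of \eqref{eq:6} and obtain, for each row $i \in \{1,\ldots,m\}$, a single inhomogeneous quadratic inequality in $\vect{\eta} = [\vect{\xi}^\top, \vect{\zeta}^\top]^\top$ that must hold over the intersection of the $T+1$ unit balls $\{\|\vect{\xi}\|\le 1\}$ and $\{\|\vect{\zeta}_t\|\le 1\}_{t=1}^{T}$. After homogenizing that inequality with an auxiliary scalar $\tau$ satisfying $\tau^2 \le 1$, the approximate S-lemma of Ben-Tal and Nemirovski \cite[Thm.~B.3.1]{ben2009robust} will yield the claimed SDP relaxation together with the $9.19\sqrt{\ln(T+1)}$ tightness factor.

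Concretely, I would first plug \eqref{eq:quadratic} into the $i$-th row of $\vect{W}_1 \tilde{\vect{D}}^{-1}(\vect{E}\vect{\xi}+\vect{e}) + \vect{W}_2 \vect{y} \le \vect{u}$, use $\vect{u} = \vect{\Theta}\vect{\zeta} + \vect{\nu}$, and collect terms by degree in $\vect{\eta}$. This produces
\[
    \vect{\eta}^\top\!\Big(\sum_{j=1}^{(n-1)T} W_{2,ij}\vect{Q}_j\Big)\vect{\eta} + 2\vect{\rho}^{i}\vect{\eta} + \big(\vect{w}_{1,i}\tilde{\vect{D}}^{-1}\vect{e} + \vect{w}_{2,i}\vect{c} - \nu_i\big)\le 0,
\]
with $\vect{\rho}^{i}$ precisely as in the theorem---the $-\vect{\theta}_i\vect{\zeta}$ summand arises from moving $\vect{u}$ to the left-hand side, and the $1/2$ scaling is dictated by the normalization $2\vect{\rho}^{i}\vect{\eta}$. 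The robust constraint then amounts to requiring this quadratic inequality for every $\vect{\eta}$ in the intersection of the $T+1$ unit ellipsoids, which is exactly the setting of the approximate S-procedure.

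Next, I would homogenize by adjoining $\tau^2 \le 1$ and apply the multi-ellipsoid S-procedure with nonnegative multipliers $\lambda_{0,i}$ for the homogenizer, $\lambda_{1,i}$ for $\|\vect{\xi}\|^2 \le 1$, and $\lambda_{t+1,i}$ for $\|\vect{\zeta}_t\|^2 \le 1$, $t=1,\ldots,T$. Because $\vect{\eta}$ splits into a single $T$-dimensional block for $\vect{\xi}$ and $T$ blocks of dimension $N_u$ for each $\vect{\zeta}_t$, aggregating the multiplier-weighted quadratic forms $\lambda_{1,i}\|\vect{\xi}\|^2 + \sum_{t=1}^{T}\lambda_{t+1,i}\|\vect{\zeta}_t\|^2$ yields precisely the block-diagonal matrix $\vect{J}_i$ of the theorem. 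The scalar component of the S-procedure inequality, combined with the homogenization constant, becomes \eqref{eq:quadratic:reform:b}; the matrix component, placing $\vect{\rho}^{i}$ in the off-diagonal and $\lambda_{0,i}$ in the $(1,1)$ slot, becomes \eqref{eq:quadratic:reform:c}; and dual feasibility gives \eqref{eq:quadratic:reform:d}. Invoking \cite[Thm.~B.3.1]{ben2009robust} for $T+1$ ellipsoids then certifies tightness up to a factor $9.19\sqrt{\ln(T+1)}$, which, after rescaling, translates into the claimed relaxation of the bounds on $\|\vect{\xi}\|$ and $\|\vect{\zeta}_t\|$.

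The main technical obstacle is careful bookkeeping of the multi-ellipsoid structure. A single-ellipsoid S-procedure applied to $\vect{\eta}$ as a monolithic vector would be lossy and would not recover the block-diagonal form of $\vect{J}_i$; one must keep $T+1$ separate Lagrange multipliers and check that the sum $\sum_{k=0}^{T+1}\lambda_{k,i}$ appears consistently on the scalar side and block-by-block inside $\vect{J}_i$ on the matrix side. A secondary subtlety is the treatment of the $\vect{\zeta}$-dependent right-hand side $\vect{u}$: its linear-in-$\vect{\zeta}$ contribution must be merged with the $\vect{w}_{2,i}\vect{L}$ block \emph{before} halving, which is precisely why the theorem defines $\vect{\rho}^{i}$ as the $1/2$-scaled concatenation of a $\vect{\xi}$-block and a $-\vect{\theta}_i$ block shifted by $\vect{w}_{2,i}\vect{L}$.
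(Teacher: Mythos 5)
Your route is the same as the paper's: substitute the quadratic policy \eqref{eq:quadratic} into each row of \eqref{eq:6}, homogenize the resulting inhomogeneous quadratic inequality in $\vect{\eta}$, and apply the approximate S-lemma with one multiplier per ball ($\lambda_{0,i}$ for the homogenizer, $\lambda_{1,i}$ for $\|\vect{\xi}\|\le 1$, and $\lambda_{t+1,i}$ for each $\|\vect{\zeta}_t\|\le 1$), reading off \eqref{eq:quadratic:reform:b}--\eqref{eq:quadratic:reform:d} from the block structure and the $9.19\sqrt{\ln(T+1)}$ factor from the matrix cube theorem. Your bookkeeping of $\vect{\rho}^i$, $\vect{J}_i$, and the scalar constraint is consistent with the paper's.

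The one step you pass over is the one the paper singles out as the key observation: that replacing the homogenizer's constraint $\tau=1$ by $\tau^2\le 1$ is \emph{lossless}. Requiring the quadratic inequality for all $\tau^2\le 1$ is a priori stronger than requiring it only at $\tau=1$, so the direction needed for a safe approximation is automatic; but the tightness-factor claim rests on the approximate S-lemma's guarantee for the \emph{homogenized} maximization, and transferring that guarantee back to the original constraint requires that the two worst-case values coincide. The paper's argument is short: if $(\tau^*,\vect{\eta}^*)$ maximizes the homogenized form with $|\tau^*|<1$, one may assume the cross term $2(\vect{\rho}^i)^\top\vect{\eta}^*$ is nonnegative (since $(-\tau^*,-\vect{\eta}^*)$ is also feasible and attains the same value); if it is strictly positive, increasing $\tau^*$ to $1$ strictly improves the objective, a contradiction, and if it vanishes, $(1,\vect{\eta}^*)$ is also optimal. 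You should supply this (or an equivalent) argument; with it, your proof is complete and matches the paper's.
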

\begin{proof}
    Denote the RHS of \eqref{eq:quadratic:reform:c} by $\tilde{\vect{Q}}_i$, then the ARO \eqref{eq:6} with quadratic policy \eqref{eq:quadratic} can be reformulated as the following robust optimization problem by plugging \eqref{eq:quadratic} into \eqref{eq:6}:
    \begin{subequations}
    \begin{align}
        &\max_{\vect{E} \succeq 0, \vect{e}, \vect{c}, \vect{L}, \{\vect{Q}_i\}_{i=1}^{m}} \quad  \log \det \vect{E} \\
        &\quad \text{s.t.} \qquad 
         (\forall i =1, \ldots, m) \nonumber\\
        &\vect{w}_{1,i} \tilde{\vect{D}}^{-1} \vect{e}\! +\! \vect{w}_{2,i} \vect{c} - \vect{\nu}_i + \max_{\substack{y = 1, \|\vect{\xi}\|_2 \le 1 \\ \| \vect{\zeta}_k \|_2 \le 1}} \left\{ \begin{bmatrix} y \\ \vect{\eta} \end{bmatrix}^\top\!\! \tilde{\vect{Q}}_i \begin{bmatrix} y \\ \vect{\eta} \end{bmatrix} \right\}\! \le\! 0, %\quad \forall i = 1, \ldots, m 
        \label{eq:ref1:b}
    \end{align}
    \end{subequations}
    The key observation is that the maximization problem in \eqref{eq:ref1:b} (which we denote by (P)) admits an exact reformulation by relaxing $y=1$ to $y^2 \le 1$ (we refer to the relaxed problem by (R)). To see this, suppose $(y^*, \vect{\eta}^*)$ is an optimal solution of (P) such that $|y^*| < 1$. The optimal value of (P) is given by $2 (\vect{\rho}^i)^\top \vect{\eta}^* y^* + {\vect{\eta}^*}^\top (\sum_{j=1}^{(n-1)T} {W_{2,ij} \vect{Q}_j}) \vect{\eta}^*$. Without loss of generality, we may assume $2 (\vect{\rho}^i)^\top \vect{\eta}^*$ is nonnegative because $(-y^*, -\vect{\eta}^*)$ is also an optimal solution. It then follows that when $2 (\vect{\rho}^i)^\top \vect{\eta}^* > 0$, $2 (\vect{\rho}^i)^\top \vect{\eta}^* y^* < 2 (\vect{\rho}^i)^\top \vect{\eta}^*$, which contradicts the fact that $(y^*, \vect{\eta}^*)$ is optimal; and when $2 (\vect{\rho}^i)^\top \vect{\eta}^* = 0$, $(1, \vect{\eta}^*)$ is also optimal. 
    
    We can therefore replace (P) by (R), and \eqref{eq:quadratic:reform} follows by applying the approximate $S$-Lemma on (R). The tightness factor comes from the matrix cube theorem \cite{ben2003extended} (cf. (B-3) of \cite[Thm. B.3.1.]{ben2009robust}).
\end{proof}

Alternatively, one may also impose simpler policies, such as an affine policy, which takes the following form:
\begin{align} \label{eq:affine}
    \vect{y} = \vect{K} \vect{\xi} + \sum_{t=1}^T \vect{L}_t \vect{\zeta}_t + \vect{\gamma},
\end{align}
where the decision variables are $\vect{K} \in \mathbb{R}^{m \times T}$, $\vect{L}_i \in \mathbb{R}^{m \times N_u}$ for $i = 1, \ldots, T$, and $\vect{\gamma} \in \mathbb{R}^m$. Though it is more restrictive than the quadratic policy, problem \eqref{eq:6} with affine policy \eqref{eq:affine} admits an exact tractable reformulation, which is computationally much more efficient than its quadratic policy counterpart. The tractable reformulation of \eqref{eq:6} with affine policy is given in the following theorem:
\begin{theorem} \label{thm:affine}
    The problem: 
    \begin{subequations} \label{eq:affine:reform}
    \begin{align}
        & \max_{ \vect{E} \succeq 0, \vect{e}, \vect{K}, \{ \vect{L}_t\}_{t=1}^{T}, \vect{\gamma}, \{\alpha_i\}_{i=1}^{m}} \quad \log \det \vect{E} \\
        & \quad \text{s.t.} \qquad \quad (\forall i = 1, \ldots, m)\nonumber\\ &\| \vect{w}_{1,i}\tilde{\vect{D}}^{-1}\vect{E} + \vect{w}_{2,i}\vect{K} \| + \sum_{k=1}^T \| \vect{w}_{2,i}\vect{L}_k - \vect{\theta}_{k,i} \| \leq \alpha_i,\\
        & \alpha_i+ \vect{w}_{1,i}\tilde{\vect{D}}^{-1}\vect{e} + \vect{w}_{2,i}\vect{\gamma} - \nu_i \le 0,  \label{eq:affine:reform:b}
    \end{align}
    \end{subequations}
    is a tractable reformulation of the robust optimization problem \eqref{eq:6} with the affine policy \eqref{eq:affine}.
\end{theorem}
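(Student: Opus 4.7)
The plan is to reduce the robust semi-infinite constraint in \eqref{eq:6}, after substituting the affine policy, to a finite system of convex constraints by analytically maximizing over the product of Euclidean balls that make up the uncertainty set. Because the uncertainty in \eqref{eq:6} enters only through the right-hand side $\vect{u}(\vect{\zeta})$ and through the ellipsoid parameterization $\vect{E}\vect{\xi}+\vect{e}$, and because the affine policy makes $\vect{y}$ a linear function of $(\vect{\xi},\vect{\zeta})$, the left-hand side of each inequality becomes linear in $(\vect{\xi},\vect{\zeta}_1,\dots,\vect{\zeta}_T)$. This is the familiar setting in which robust LP with ellipsoidal uncertainty admits an exact second-order cone reformulation.

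First I would substitute \eqref{eq:affine} into the $i$-th row of the constraint in \eqref{eq:6}, use $\vect{u}=\sum_t\vect{\Theta}_t\vect{\zeta}_t+\vect{\nu}$, and collect terms by $\vect{\xi}$ and each $\vect{\zeta}_t$. The resulting requirement is that, for every $\|\vect{\xi}\|\le 1$ and every $\|\vect{\zeta}_t\|\le 1$ with $t=1,\dots,T$,
\begin{align*}
& (\vect{w}_{1,i}\tilde{\vect{D}}^{-1}\vect{E} + \vect{w}_{2,i}\vect{K})\,\vect{\xi}
+ \sum_{t=1}^T (\vect{w}_{2,i}\vect{L}_t - \vect{\theta}_{t,i})\,\vect{\zeta}_t \\
& \qquad + \vect{w}_{1,i}\tilde{\vect{D}}^{-1}\vect{e} + \vect{w}_{2,i}\vect{\gamma} - \nu_i \;\le\; 0.
\end{align*}

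Next I would exploit the Cartesian-product structure of the uncertainty set: since $\vect{\xi}$ and the $\vect{\zeta}_t$ lie in independent Euclidean balls and enter the left-hand side through disjoint linear terms, the worst-case value decouples into a sum of pointwise maxima. Applying the standard support-function identity $\max_{\|\vect{x}\|\le 1}\vect{v}^\top\vect{x}=\|\vect{v}\|$ to each term, the robust constraint becomes
\begin{align*}
\| \vect{w}_{1,i}\tilde{\vect{D}}^{-1}\vect{E} + \vect{w}_{2,i}\vect{K} \|
+ \sum_{t=1}^T \| \vect{w}_{2,i}\vect{L}_t - \vect{\theta}_{t,i} \| \\
+ \vect{w}_{1,i}\tilde{\vect{D}}^{-1}\vect{e} + \vect{w}_{2,i}\vect{\gamma} - \nu_i \;\le\; 0.
\end{align*}
To match the form of \eqref{eq:affine:reform}, I would then introduce the epigraph variable $\alpha_i$ as an upper bound on the sum of norms; since this sum is a function of the other decision variables alone, tightening $\alpha_i$ to equality is always feasible at optimum and introduces no conservatism. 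This yields exactly the two SOC/linear constraints in \eqref{eq:affine:reform}, and the reformulation is exact, not just a relaxation.

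I do not expect a real obstacle: the argument is a textbook robust-LP derivation for ellipsoidal uncertainty, and the only substantive point is the separability of the max over the product uncertainty set, which is immediate from the disjoint linear dependence of the constraint on $\vect{\xi}$ and on each $\vect{\zeta}_t$. The only care needed is in bookkeeping — tracking that $\vect{w}_{1,i}$ and $\vect{w}_{2,i}$ are rows (so the quantities inside the norms are row vectors whose Euclidean norms are well-defined) and that the epigraph step preserves both directions of the equivalence with the affine-policy version of \eqref{eq:6}.
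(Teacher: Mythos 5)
Your proposal is correct and follows essentially the same route as the paper, which simply invokes the standard constraint-wise worst-case reformulation of a linear program under conic (ellipsoidal) uncertainty via the Cauchy--Schwarz/support-function identity $\max_{\|\vect{x}\|\le 1}\vect{v}^\top\vect{x}=\|\vect{v}\|$; your write-up just makes explicit the term collection, the decoupling over the Cartesian product of balls, and the lossless epigraph step for $\alpha_i$.
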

\begin{proof}
    The result follows from a standard reformulation of linear program under conic uncertainty using Cauchy-Schwarz inequality, where we require all inequalities in \eqref{eq:6} to be satisfied constraint-wise under worst-case uncertainty realization.
\end{proof}

\begin{remark}[Comparing two reformulations]
    Theorems \ref{thm:quadratic} and \ref{thm:affine} have shown that, under load uncertainty, imposing affine policy for the second-stage variable $\vect{y}$ in \eqref{eq:6} renders the reformulation convex, whereas only a computationally tractable safe approximation can be derived when quadratic policy is imposed. Because affine policy is nothing but quadratic policy with all $\vect{Q}_i = \mathbbold{0}$, the latter will always outperform the former when the respective problems are solved to optimality; however, because only an approximate formulation can be derived with quadratic policy, the quality of the two are not necessarily comparable. In particular, \eqref{eq:affine:reform} dominates \eqref{eq:quadratic:reform} in the extreme case when $\vect{W}_2 = \mathbbold{0}$. To see this, note that when $\vect{W}_2 = \mathbbold{0}$, by Schur complement lemma, \eqref{eq:quadratic:reform:c}--\eqref{eq:quadratic:reform:d} is equivalent to:
    \begin{subequations} \label{eq:compare}
    \begin{align}
        4\lambda_{0,i} & \ge \frac{\| \vect{w}_{1,i}\tilde{\vect{D}}^{-1}\vect{E} + \vect{w}_{2,i}\vect{K} \|^2}{\lambda_{1,i}} + \sum_{k=1}^T \frac{\| \vect{w}_{2,i}\vect{L}_k - \vect{\theta}_{k,i} \|^2}{\lambda_{k+1,i}}, \label{eq:compare:a} \\
        \lambda_{k,i} & \ge 0, \quad k = 0, \ldots ,T+1. \label{eq:compare:b}
    \end{align}
    \end{subequations}
    Dividing both sides of \eqref{eq:compare:a} by $4$, then adding $\lambda_{k, i}$ for $k \in \{1, \ldots, T+1\}$ to both sides, we get:
    \begin{equation}
    \begin{split}
        \nonumber%\label{eq:compare:c}
        \sum_{k=0}^{T+1} \lambda_{k,i} \geq \frac{\| \vect{w}_{1,i}\tilde{\vect{D}}^{-1}\vect{E} + \vect{w}_{2,i}\vect{K} \|^2 + 4\lambda_{1,i}^2}{4\lambda_{1,i}} \ +\\ \sum_{k=1}^T \frac{\| \vect{w}_{2,i}\vect{L}_k - \vect{\theta}_{k,i} \|^2+ 4\lambda_{k+1,i}^2}{4\lambda_{k+1,i}}
    \end{split}
    \end{equation}
    noting that $(a^2 + 4b^2) / 4b \ge a$ for any $a,b>0$, we deduce:
    \begin{equation} \label{eq:sumlam}
        \sum_{k=0}^{T+1} \lambda_{k,i} \ge
        \| \vect{w}_{1,i}\tilde{\vect{D}}^{-1}\vect{E} + \vect{w}_{2,i}\vect{K} \|
        + \sum_{k=1}^T \| \vect{w}_{2,i}\vect{L}_k - \vect{\theta}_{k,i} \|.
    \end{equation}
    The dominance result follows by plugging \eqref{eq:sumlam} back into \eqref{eq:quadratic:reform:b} and comparing it with \eqref{eq:affine:reform:b}. Of course, the case $W_2 = \mathbbold{0}$ is not very interesting because the problem is no longer an ARO, and we would not need to consider second-stage policy. Nonetheless, we use it to illustrate the nondominance of the reformulation \eqref{eq:quadratic:reform} based on quadratic policy. 
\end{remark}

\begin{remark}[Scenario of deterministic load power]
    It has been shown in \cite{zhen2018computing} that both quadratic and affine policies admit exact convex reformulations when the load power outputs are deterministic. As an alternative proof, notice that when $\vect{\zeta}$ is not present in the maximization problem in \eqref{eq:ref1:b} in Theorem \ref{thm:quadratic}, approximate $S$-lemma ensures the exactness of the convex relaxation in \eqref{eq:quadratic:reform}.
\end{remark}

\section{Simulation Results} \label{sect:sim}

The proposed formulations are tested using data of a real distribution feeder located in the territory of Southern California Edison. The distribution feeder has 126 multiphase nodes with a total of 366 single-phase points of connection. The nominal voltage at the substation is 12 kV, and voltage limits are set to 1.05 p.u. and 0.95 p.u for all nodes. There are 55 uncontrollable loads scattered across the feeder. Dispatchable DERs include 33 PV units, 28 energy storage devices, and 5 HVAC systems. 
Detailed configurations and parameters of the distribution feeder can be found in \cite{bernstein2019real}, and the device model parameters follow \cite{chen2020aggregate}.
MATLAB 2018a is used for all computational experiments. Optimization formulations are modeled using CVX \cite{cvx} and solved with MOSEK \cite{mosek}.

\vspace{-0.1in}

\subsection{Deterministic Model}
In the first experiment, we implement the proposed ellipsoidal inner approximation formulations \eqref{eq:quadratic:reform} and \eqref{eq:affine:reform}, and we compare the results with an existing method in \cite{chen2020leveraging} based on hyperbox inner approximation. We assume that the model is deterministic. Note that in this case the quadratic policy formulation \eqref{eq:quadratic:reform} dominates the one in \eqref{eq:affine:reform} based on affine policy. We set the time horizon to be from 9:00 to 13:00, with 1-hour granularity. The number of time steps is four.

The simulation results on the volumes of the three flexibility region approximations are tabulated in Table \ref{tab:volume:determ}. The proposed ellipsoidal approximations outperform the hyperbox approximation in \cite{chen2020leveraging}. The advantage of the ellipsoidal approach can be attributed to the fact that it is more flexible in capturing the inter-temporal relationship of power outputs, whereas a hyperbox is relatively more rigid.
\begin{table}[]
	\renewcommand{\arraystretch}{1.1}
    \caption[]{Volumes of Flexibility Region Approximation}
    \centering
    \begin{tabular}{cl}
        \toprule
        Method & Volume \\
        \midrule
        Ellipsoid by quadratic policy \eqref{eq:quadratic:reform} & $271.55$ \\
        Ellipsoid by affine policy \eqref{eq:affine:reform} & $217.57$ \\
        Hyperbox approximation \cite{chen2020leveraging} & $96.88$\\
        \bottomrule
    \end{tabular}
    \label{tab:volume:determ}
\end{table}

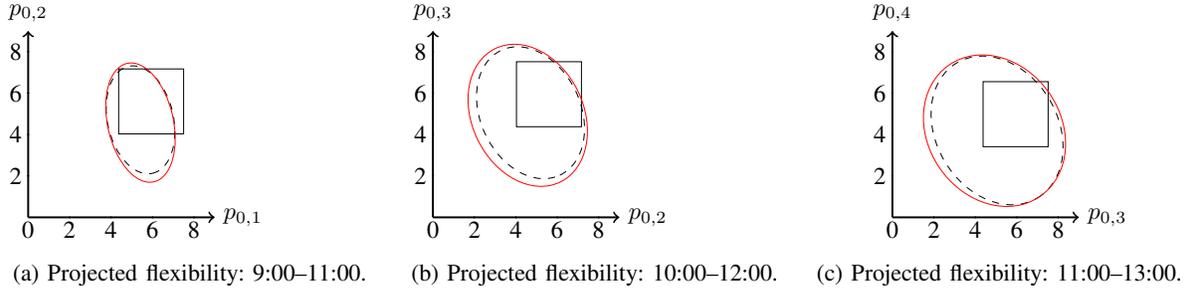
\begin{figure*}
    \centering
    \captionsetup[subfigure]{justification=centering}
     \begin{subfigure}{0.29\textwidth}
        % \centering
        \resizebox{1.5in}{!}{
	    \begin{tikzpicture}[x=0.3cm, y=0.3cm]
	
	    \draw[thick,->] (0,0) -- (9,0) node[right] {$p_{0, 1}$};
	    \draw[thick,->] (0,0) -- (0,9) node[above] {$p_{0, 2}$};
	    
	    \node at (0,-0.6) {0};
	    \node at (2,-0.6) {2};
	    \node at (4,-0.6) {4};
	    \node at (6,-0.6) {6};
	    \node at (8,-0.6) {8};
	    \draw (2,0.05) -- (2,-0.05);
	    \draw (4,0.05) -- (4,-0.05);
	    \draw (6,0.05) -- (6,-0.05);
	    \draw (8,0.05) -- (8,-0.05);
	
	    \node at (-0.6,2) {2};
	    \node at (-0.6,4) {4};
	    \node at (-0.6,6) {6};
	    \node at (-0.6,8) {8};
	    \draw (-0.05,2) -- (0.05,2);
	    \draw (-0.05,4) -- (0.05,4);
	    \draw (-0.05,6) -- (0.05,6);
	    \draw (-0.05,8) -- (0.05,8);
	
	    \draw (4.3662,4.0309) rectangle (7.5009,7.1664);
	    \draw[rotate around={-77.3407:(5.4207, 4.7100)}, dashed] (5.4207, 4.7100) ellipse (2.6459 and 1.5912);
	    \draw[rotate around={12.7438:(5.4182,4.5748)}, color=red] (5.4182,4.5748) ellipse (1.5864 and 2.9316);
	    \end{tikzpicture}
	    }
	    \caption{Projected flexibility: 9:00--11:00.} 
	    \label{fig:proj:1}
     \end{subfigure}
     \begin{subfigure}{0.29\textwidth}
        % \centering
	    \resizebox{1.5in}{!}{
	    \begin{tikzpicture}[x=0.3cm, y=0.3cm]
	
	    \draw[thick,->] (0,0) -- (9,0) node[right] {$p_{0, 2}$};
	    \draw[thick,->] (0,0) -- (0,9) node[above] {$p_{0, 3}$};
	    
	    \node at (0,-0.6) {0};
	    \node at (2,-0.6) {2};
	    \node at (4,-0.6) {4};
	    \node at (6,-0.6) {6};
	    \node at (8,-0.6) {8};
	    \draw (2,0.05) -- (2,-0.05);
	    \draw (4,0.05) -- (4,-0.05);
	    \draw (6,0.05) -- (6,-0.05);
	    \draw (8,0.05) -- (8,-0.05);
	
	    \node at (-0.6,2) {2};
	    \node at (-0.6,4) {4};
	    \node at (-0.6,6) {6};
	    \node at (-0.6,8) {8};
	    \draw (-0.05,2) -- (0.05,2);
	    \draw (-0.05,4) -- (0.05,4);
	    \draw (-0.05,6) -- (0.05,6);
	    \draw (-0.05,8) -- (0.05,8);
	
	    \draw (4.0309, 4.3763) rectangle (7.1664, 7.5147);
	    \draw[rotate around={-64.8775:(4.7100, 5.0507)}, dashed] (4.7100, 5.0507) ellipse (3.3293 and 2.4168);
	    \draw[rotate around={-64.0639:(4.5748, 4.9272)}, color=red] (4.5748, 4.9272) ellipse (3.5833 and 2.6877);
		
	    \end{tikzpicture}
	    }	
	    \caption{Projected flexibility: 10:00--12:00.} 
	    \label{fig:proj:2}
     \end{subfigure}
     \begin{subfigure}{0.29\textwidth}
        \centering
	    \resizebox{1.5in}{!}{
	    \begin{tikzpicture}[x=0.3cm, y=0.3cm]
	
	    \draw[thick,->] (0,0) -- (9,0) node[right] {$p_{0, 3}$};
	    \draw[thick,->] (0,0) -- (0,9) node[above] {$p_{0, 4}$};
	    
	    \node at (0,-0.6) {0};
	    \node at (2,-0.6) {2};
	    \node at (4,-0.6) {4};
	    \node at (6,-0.6) {6};
	    \node at (8,-0.6) {8};
	    \draw (2,0.05) -- (2,-0.05);
	    \draw (4,0.05) -- (4,-0.05);
	    \draw (6,0.05) -- (6,-0.05);
	    \draw (8,0.05) -- (8,-0.05);
	
	    \node at (-0.6,2) {2};
	    \node at (-0.6,4) {4};
	    \node at (-0.6,6) {6};
	    \node at (-0.6,8) {8};
	    \draw (-0.05,2) -- (0.05,2);
	    \draw (-0.05,4) -- (0.05,4);
	    \draw (-0.05,6) -- (0.05,6);
	    \draw (-0.05,8) -- (0.05,8);
	
	    \draw (4.3763, 3.4115) rectangle (7.5147, 6.5521);
	    \draw[rotate around={-59.2044:(5.0507, 4.1966)}, dashed] (5.0507, 4.1966) ellipse (3.7974 and 2.9357);
	    \draw[rotate around={-55.2511:(4.9272, 4.1843)}, color=red] (4.9272, 4.1843) ellipse (3.8563 and 3.2049);
		
	    \end{tikzpicture}
	    }	
	    \caption{Projected flexibility: 11:00--13:00.} 
	    \label{fig:proj:3}
     \end{subfigure}
     \caption{Projection of flexibility region approximations. (hyperbox: black solid line; affine policy: black dashed line; quadratic policy: red solid line.)}\vspace{-12pt}
     \label{fig:projection}
\end{figure*}

To visualize the approximated flexibility region, we project the resulting approximations on the planes. Let $p_{0,t}$ denote the substation power at time interval ($t$+8):00--($t$+9):00. Fig. \ref{fig:projection} shows the flexibility region approximations projected on three planes representing $p_{0,1}$--$p_{0,2}$, $p_{0,2}$--$p_{0,3}$, and $p_{0,3}$--$p_{0,4}$. It is quite clear from the figures that the approximations given by the proposed ellipsoidal characterizations are less conservative. Both ellipsoidal approximations almost dominate the hyperbox one in Fig. \ref{fig:proj:3}; however, note that a point is feasible for the hyperbox approximation if and only if it is feasible for each projected dimension, whereas this is not the case for the proposed ellipsoidal approximations. Also note that even though quadratic policy leads to an approximated region with no less volume than the affine policy, the flexibility region given by the former is not necessarily a super set of the latter, as shown in Fig. \ref{fig:proj:1}, where the dashed ellipses stretch slightly outside of the red ones on the right side of the two ellipses. 

\subsection{Model with Uncertainty}

In this section, we compute the flexibility region of the model studied in the last section under different levels of uncertainty. Although \eqref{eq:quadratic:reform} is computationally intensive, formulation \eqref{eq:affine:reform} based on affine policy yields satisfactory results within a reasonable time. Table \ref{tab:uncertain} shows the results of the volume of flexibility region characterized by \eqref{eq:affine:reform} under increasing levels of uncertainty. It shows that the computation time slightly increases with level of uncertainty but remains within 1 minute even for the most challenging case, where the aggregate flexibility region almost vanishes.
\begin{table}[]
	\renewcommand{\arraystretch}{1.1}
    \caption[]{Flexibility by Affine Policy Under Uncertainty}
    \centering
    \begin{tabular}{cll}
        \toprule
        Uncertainty level ($\delta$) & Volume & Time (sec.)  \\
        \midrule
        $5\%$  & $148.88$ & $45.64$ \\
        $10\%$ & $97.25$  & $41.52$ \\
        $15\%$ & $57.20$  & $48.09$ \\
        $20\%$ & $30.20$  & $48.73$ \\
        $25\%$ & $12.67$  & $49.59$ \\
        $30\%$ & $2.78$   & $52.91$ \\
        \bottomrule
    \end{tabular}
    \vspace{-10pt}
    \label{tab:uncertain}
\end{table}

\section{Conclusions} \label{sect:conclusions}

This paper proposed novel formulations for the inner approximation of the distribution system flexibility region. The approaches use efficient policy-based approximations of the ARO formulation of computing inscribed MVE of a polytopic projection. The proposed formulations are the first to comprehensively address the issues of network awareness, load uncertainty, and time coupling. Simulation results validated that the deterministic version of the proposed results outperform the state of the art in terms of approximation quality. Some future research directions include considering the coupled real-reactive power flexibility region as well as alternative uncertainty modeling.
\balance

\bibliographystyle{IEEEtran}
\bibliography{bibtex/IEEEabrv,bibtex/flex}

\end{document}